\newcommand{\R}{\mathbb{R}}
\newcommand{\I}{\mathbb{I}}
\newcommand{\bff}[1]{\mathbf{#1}}
\renewcommand{\L}{\mathrm{L}}
\newcommand{\U}{\mathrm{U}}
\newcommand{\cv}{\mathrm{cv}}
\newcommand{\cc}{\mathrm{cc}}
\newcommand{\A}{\mathrm{A}}
\newcommand{\B}{\mathrm{B}}
\newcommand{\T}{\mathrm{T}}
\newcommand{\mrm}[1]{\mathrm{#1}}
\newcommand{\ysnote}[1]{{#1}}
\newtheorem{Assumption}{Assumption}[section]
\newtheorem{EExample}{Example}[section]
\begin{document}

\newcommand\relatedversion{}

\title{\Large  Computing AD-compatible subgradients of convex relaxations of implicit functions\relatedversion}
\author{Yingkai Song\thanks{Andlinger Center for Energy and the Environment, Princeton University.}
	\and Kamil A. Khan\thanks{Department of Chemical Engineering, McMaster University. \newline This work was supported in part by the Natural Sciences and Engineering Research Council of Canada (NSERC)
under Grant RGPIN-2017-05944.}}

\date{}

\maketitle


\fancyfoot[R]{\scriptsize{Copyright \textcopyright\ 2025 by SIAM\\
Unauthorized reproduction of this article is prohibited}}





\begin{abstract} \small\baselineskip=9pt 
Automatic generation of convex relaxations and subgradients is critical in global optimization, and is typically carried out using variants of automatic/algorithmic differentiation (AD).
At previous AD conferences, variants of the forward and reverse AD modes were presented to evaluate accurate subgradients for convex relaxations of supplied composite functions. In a recent approach for generating convex relaxations of implicit functions, these relaxations are constructed as optimal-value functions; this formulation is versatile but complicates sensitivity analysis. We present the first subgradient propagation rules for these implicit function relaxations, based on supplied AD-like knowledge of the residual function. \ysnote{Our new subgradient rules allow implicit function relaxations to be added to the elemental function libraries for the forward AD modes for subgradient propagation of convex relaxations.} Proof-of-concept \ysnote{numerical} results in Julia are presented.
\end{abstract}
\section{Introduction}

Typical state-of-the-art methods for global minimization proceed by generating lower and upper bounds on the unknown optimal value, and then successively improving these bounds. While upper bounds may be evaluated as local minima using standard solvers, typical approaches for computing useful lower bounds (e.g.~in the solver BARON~\cite{sahinidis1996baron}) involve generating convex relaxations and minimizing those. Based on AD-like construction rules by McCormick~\cite{mccormick1976computability}, a forward AD mode variant by Mitsos et al.~\cite{mitsos2009mccormick} automatically evaluates a convex relaxation for a supplied composition of elemental functions from a library. Mitsos et al.~also developed a forward AD mode variant to evaluate subgradients for these relaxations; subgradients are useful in convex optimization, and are particularly useful when computing the lower bounds in global optimization. At the AD2012 conference, Beckers et al.~\cite{beckers2012adjoint} presented an efficient reverse AD mode variant of Mitsos et al.'s subgradient propagation method. These AD modes are in active use today, including in the Julia package \textsf{EAGO.jl}~\cite{wilhelm2022eago} for global optimization, and in a new approach for computing adjoint subgradients of convex relaxations of differential equation solutions~\cite{zhang2024adjoint}.

Cao and Khan~\cite{cao2023general} recently presented a new method to construct useful convex relaxations of implicit functions based on known relaxations of the residual functions. Though this approach is versatile, it constructs implicit function relaxations as convex optimal-value functions, for which sensitivity analysis is traditionally considered to be difficult~\cite{fiacco1990sensitivity}. In order to incorporate implicit functions into the forward AD modes for subgradient propagation, we seek subgradients of these new relaxations. \ysnote{This article presents the first useful subgradient computation rules for these relaxations, thus allowing implicit functions to be added to the elemental function libraries for constructing McCormick-style convex relaxations with subgradients computed by forward-mode AD, and to be used in established implementations. We informally refer to these properties as ``AD-compatibility''. While AD-compatibility is generally not an issue for compositions of smooth functions due to the structure of the well-known chain-rule, appropriate AD rules only have been developed recently for composite nonsmooth functions~\cite{beckers2012adjoint,khan2018branch,KhanBartonFwdAD,mitsos2009mccormick}. We nevertheless note that due to the nonlinearity with respect to input directions of one of our new approaches, there may not be a direct reverse AD analog of that approach; a similar situation was discussed in~\cite{griewank1995automatic}.} We proceed based on recent AD rules for nonsmooth optimal value functions~\cite{song2024generalized}, bound-constrained convex functions~\cite{yuan2023automatic}, and nonsmooth closed-form composite functions~\cite{KhanBartonAD,KhanBartonFwdAD}, the latter of which were presented at the AD2012 and AD2016 conferences.

The remainder of this article is laid out as follows. Section~\ref{sec:preliminaries} summarizes relevant established concepts including convex relaxations, subgradients, and the new implicit function relaxations of~\cite{cao2023general}. Section~\ref{sec:outline} summarizes the particular situations we will later consider. Section~\ref{sec:nx=1} presents new subgradients for relaxations of scalar-valued implicit functions, Section~\ref{sec:nx>1} covers vector-valued implicit functions, and Section~\ref{sec:numerical} presents numerical examples based on a proof-of-concept implementation in Julia.
\section{Preliminaries}\label{sec:preliminaries}
\subsection{Notation}
Until Section~\ref{sec:numerical}, scalars are denoted as lowercase letters (e.g. $x\in\R$), vectors are denoted as boldface lowercase letters (e.g. $\bff{x}\in\R^n$), and the $i^{\mrm{th}}$ component of a vector $\bff{x}$ is denoted as $x_i$. $\bff{0}$ denotes a vector with all components zero, with a dimension that is clear from the context.  For each $j\in \{1,2,...,n\}$, denote the $j$\textsuperscript{th} unit coordinate vector in $\R^n$ as $\bff{e}^{(j)}$. Inequalities involving vectors are to be interpreted componentwise. Matrices are denoted as boldface uppercase letters (e.g. $\bff{M}\in\R^{n\times m}$). The $j^{\mrm{th}}$ column of a matrix $\bff{M}$ is denoted as $\bff{m}_{(j)}$. Sets are denoted as uppercase letters (e.g. $X\subset\R^n$). For any $\bff{x}^\L,\bff{x}^\U\in\R^n$ with $\bff{x}^\L\le \bff{x}^\U$, an interval $X:=[\bff{x}^\L,\bff{x}^\U]$ is defined as the compact set $\{\bm{\xi}\in\R^n:\bff{x}^\L\le \bm{\xi}\le \bff{x}^\U\}$. Let $\I\R^{n}$ denote the collection of all intervals in $\R^n$. The interior of a set $X$ is denoted as $\mathrm{int}(X)$. For a scalar-valued \emph{(Fr\'echet) differentiable} function $f(\bff{x},\bff{y})$, $\nabla_\bff{x}f(\bff{x},\bff{y})\in\R^{n_x}$ and $\nabla_\bff{y}f(\bff{x},\bff{y})\in\R^{n_y}$ denote the partial gradients of $f$ with respect to $\bff{x}$ and $\bff{y}$, respectively. A vector function $\bff{f}$ is said to be convex, if each component $f_i$ is convex. The abbreviation ``s.t.'' stands for ``subject~to''.

\subsection{Convex relaxations and subgradients}

In~state-of-the-art  global optimization solvers such as BARON~\cite{sahinidis1996baron}, upper and lower bounds are generated for the unknown optimal value, and are then successively refined. In this setting, lower bounds are typically generated by minimizing convex relaxations; the following definition of convex relaxations accommodates functions with irregular domains.

\begin{Definition}[from \cite{cao2023general}]\label{Definition:convex}
	\normalfont
	Consider a convex set $P\subset \R^{n_p}$ and a subset $Q\subset P$. Consider the extended real numbers $\bar{\R}:=\R\cup\{-\infty,+\infty\}$, and a function $\bm{\phi}:Q\to{\R}^m$. Then:
	\begin{enumerate}
		\item $\bm{\phi}^\cv:P\to\bar{\R}^m$ is a \emph{convex relaxation} of $\bm{\phi}$ on $P$ if
		\begin{itemize}
			\item $\bm{\phi}^\cv(\bff{p})\le \bm{\phi}(\bff{p})$ for all $\bff{p}\in Q$, and 
			\item $\bm{\phi}^\cv$ is convex on $P$.
		\end{itemize}
		\item $\bm{\phi}^\cc:P\to\bar{\R}^m$ is a \emph{concave relaxation} of $\bm{\phi}$ on $P$ if
		\begin{itemize}
			\item $\bm{\phi}^\cc(\bff{p})\ge \bm{\phi}(\bff{p})$ for all $\bff{p}\in Q$, and 
			\item $\bm{\phi}^\cc$ is concave on $P$.
		\end{itemize}
	\end{enumerate}
\end{Definition}

\begin{Definition}[from {\cite{scholtes2012introduction}}]\label{Definition:direcderiv}
	\normalfont
	Given an open set $X\subset \R^n$, a function $\bff{f}:X\to\R^m$, {and vectors $\bff{x}\in X$ and $\bff{d}\in\R^n$}, the limit
	\begin{equation*}
		\lim_{\alpha\downarrow 0}\frac{\bff{f}(\bff{x}+\alpha\bff{d})-\bff{f}(\bff{x})}{\alpha}
	\end{equation*}
	is called the \emph{directional derivative} of $\bff{f}$ at $\bff{x}$ {in the direction $\bff{d}$} if it exists, and is denoted as {$\bff{f}'(\bff{x};\bff{d})$ or $[\bff{f}]'(\bff{x};\bff{d})$; the latter notation is clearer when a function's name includes superscripts.} The function $\bff{f}$ is \emph{directionally differentiable} at $\bff{x}$ if $\bff{f}'(\bff{x};\bff{d})$ exists and is finite for each $\bff{d}\in \R^n$.
\end{Definition}
{Convex functions defined on open sets are always directionally differentiable (see \cite[Theorem~23.4]{rockafellar2015convex}).} The following definition of subgradients is standard in convex analysis~\cite{rockafellar2015convex}.
\begin{Definition}[{from {\cite[Page 214]{rockafellar2015convex}}}]\label{Definition:subgradient}
	\normalfont
	Given an open convex set $X\subset \R^n$, a convex function $f^{\cv}:X\to\R$, \ysnote{and a point $\bff{x}\in\mathrm{int}(X)$}, a vector $\bff{s}^{\cv}\in\R^{n}$ is a \emph{subgradient} of $f^{\cv}$ at $\bff{x}\in X$ if
	\begin{equation*}
		f^{\cv}(\bm{\xi})\ge f^{\cv}(\bff{x})+\langle\bff{s}^{\cv},\bm{\xi}-\bff{x}\rangle,\quad\forall \bm{\xi}\in X.
	\end{equation*}
	Similarly, given a concave function $f^{\cc}:X\to\R$, $\bff{s}^{\cc}\in\R^n$ is a subgradient of $f^{\cc}$ at $\bff{x}\in X$ if
	\begin{equation*}
		f^{\cc}(\bm{\xi})\le f^{\cc}(\bff{x})+\langle\bff{s}^{\cc},\bm{\xi}-\bff{x}\rangle,\quad\forall \bm{\xi}\in X.
	\end{equation*}
	The \emph{subdifferential} $\partial f^{\cv}(\bff{x})\subset\R^n$ (resp. $\partial f^{\cc}(\bff{x})\subset\R^n$) is the collection of all subgradients of $f^{\cv}$ (resp. $f^{\cc}$) at $\bff{x}$, and is nonempty, convex, and compact.
\end{Definition}

\ysnote{As summarized in the introduction, the McCormick relaxations~\cite{mccormick1976computability,mitsos2009mccormick} are convex relaxations that may be computed by a forward sweep through a composite function's computational graph. Subgradients for the McCormick relaxations may be evaluated either by a forward AD mode variant~\cite{mitsos2009mccormick} or a reverse AD mode variant~\cite{beckers2012adjoint}.}

\subsection{Implicit function relaxations}\label{sec:LD}
Definition~\ref{Definition:implicit} below formalizes a generic implicit function considered in the remainder of this article.
\begin{Definition}[from \cite{cao2023general}]\label{Definition:implicit}
	\normalfont
	Consider a residual function $\bff{f}:\R^{n_x}\times\R^{n_p}\to\R^{n_x}$, and the following system of equations:
	\begin{equation}\label{eq:residual}
		\bff{f}(\bff{z},\bff{p})=\bff{0}.
	\end{equation}
	Consider a convex compact set $P\subset\R^{n_p}$, and let $Q\subset P$ be the set of $\bff{p}\in P$ for which \eqref{eq:residual} has at least one solution $\bff{z}$. Suppose that the following conditions hold:
	\begin{enumerate}
		\item The set $Q$ is nonempty, so there is a meaningful \emph{implicit function} $\bff{x}:Q\to\R^{n_x}$ that satisfies:
		\begin{equation}\label{eq:implicit}
			\bff{f}(\bff{x}(\bff{p}),\bff{p})=\bff{0}.
		\end{equation}
		\item There is a known interval $X:=[\bff{x}^\L,\bff{x}^\U]\in\I\R^{n_x}$ for which \eqref{eq:implicit} holds and $\bff{x}(\bff{p})\in X$ for every $\bff{p}\in Q$.
	\end{enumerate}
\end{Definition}

Cao and Khan~\cite{cao2023general} developed a new optimization-based approach for constructing useful convex and concave relaxations for the implicit function in Definition~\ref{Definition:implicit}, based on known relaxations of the residual function $\bff{f}$. This approach is summarized in the following proposition.
\begin{proposition}[from \cite{cao2023general}]\label{Proposition:huiyi}
	Consider the setup in Definition~\ref{Definition:implicit}. Let $\bff{f}^\cv,\bff{f}^\cc:X\times P\to\R^{n_x}$ be convex and concave relaxations of $\bff{f}$ on $X\times P$, respectively. Define $\bff{x}^\cv,\bff{x}^\cc:P\to\bar{\R}^{n_x}$ such that, for each $i\in\{1,...,n_x\}$ and $\bff{p}\in P$,
	\begin{equation}\label{opt:convexrelaxation}
		\begin{aligned}
			x_i^\cv(\bff{p})&:=\inf_{\bm{\xi}\in X}\xi_i\quad{\text{s.t.}}\quad \bff{f}^\cv(\bm{\xi},\bff{p})\le \bff{0}\le \bff{f}^\cc(\bm{\xi},\bff{p}),\\
			x_i^\cc(\bff{p})&:=\sup_{\bm{\xi}\in X}\xi_i\quad{\text{s.t.}}\quad \bff{f}^\cv(\bm{\xi},\bff{p})\le \bff{0}\le \bff{f}^\cc(\bm{\xi},\bff{p}).
		\end{aligned}
	\end{equation}
	If these optimization problems are infeasible, then set $x_i^\cv(\bff{p}):=+\infty$ and $x_i^\cc:=-\infty$ for each $i$ by convention. Then, $\bff{x}^\cv$ is a convex relaxation of $\bff{x}$ on $P$, and $\bff{x}^\cc$ is a concave relaxation of $\bff{x}$ on $P$.
\end{proposition}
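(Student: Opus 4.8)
The plan is to verify the two defining properties of a convex relaxation (Definition~\ref{Definition:convex}) for $\bff{x}^\cv$ separately: the \emph{underestimation} property $\bff{x}^\cv(\bff{p})\le\bff{x}(\bff{p})$ on $Q$, and \emph{convexity} on $P$; the argument for $\bff{x}^\cc$ is symmetric. Throughout I would work component-by-component, fixing $i\in\{1,\dots,n_x\}$, and I would treat the feasible set
\[
    F(\bff{p}) := \{\bm{\xi}\in X : \bff{f}^\cv(\bm{\xi},\bff{p})\le\bff{0}\le\bff{f}^\cc(\bm{\xi},\bff{p})\}
\]
as the central object, noting that the infeasibility convention $x_i^\cv(\bff{p})=+\infty$ is exactly the value $\inf_{\bm{\xi}\in\emptyset}\xi_i$, so that a clean argument can avoid an explicit case split.

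For the underestimation property, fix $\bff{p}\in Q$. By hypothesis $\bff{x}(\bff{p})\in X$ and $\bff{f}(\bff{x}(\bff{p}),\bff{p})=\bff{0}$. Since $\bff{f}^\cv$ and $\bff{f}^\cc$ are respectively convex and concave relaxations of $\bff{f}$ on $X\times P$, Definition~\ref{Definition:convex} gives $\bff{f}^\cv(\bff{x}(\bff{p}),\bff{p})\le\bff{f}(\bff{x}(\bff{p}),\bff{p})=\bff{0}$ and $\bff{f}^\cc(\bff{x}(\bff{p}),\bff{p})\ge\bff{f}(\bff{x}(\bff{p}),\bff{p})=\bff{0}$. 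Hence $\bm{\xi}=\bff{x}(\bff{p})$ lies in $F(\bff{p})$ and is feasible for the problem defining $x_i^\cv(\bff{p})$; evaluating the objective there yields $x_i^\cv(\bff{p})\le x_i(\bff{p})$, and the analogous supremum problem yields $x_i^\cc(\bff{p})\ge x_i(\bff{p})$. This establishes both relaxation inequalities on $Q$.

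The crux is convexity of $\bff{p}\mapsto x_i^\cv(\bff{p})$ on $P$. First I would show that the joint set
\[
    G := \{(\bm{\xi},\bff{p})\in X\times P : \bff{f}^\cv(\bm{\xi},\bff{p})\le\bff{0}\le\bff{f}^\cc(\bm{\xi},\bff{p})\}
\]
is convex: each constraint $f_j^\cv(\bm{\xi},\bff{p})\le 0$ is a sublevel set of a convex function and each constraint $f_j^\cc(\bm{\xi},\bff{p})\ge 0$ is a superlevel set of a concave function, so $G$ is a finite intersection of convex sets together with the convex set $X\times P$. Then $x_i^\cv(\bff{p})=\inf\{\xi_i : (\bm{\xi},\bff{p})\in G\}$ is the partial minimization over $\bm{\xi}$ of the linear (hence jointly convex) objective $\xi_i$ restricted to the convex set $G$. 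By the standard result that marginal functions obtained by minimizing a jointly convex function over one block of variables are convex (e.g.~\cite[Theorem~5.3]{rockafellar2015convex}), $x_i^\cv$ is convex on $P$, with the value $+\infty$ arising precisely on $\{\bff{p} : F(\bff{p})=\emptyset\}$, matching the stated convention. The concave case is symmetric, since $x_i^\cc(\bff{p})=-\inf\{-\xi_i:(\bm{\xi},\bff{p})\in G\}$ is concave by the same theorem applied to the objective $-\xi_i$.

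The main obstacle is not any single computation but getting the convexity argument to interface cleanly with the extended-real-valued, possibly-infeasible regime. Specifically, I would need to confirm that partial minimization preserves convexity when the objective is the indicator-augmented linear function (taking $+\infty$ off $G$) and when the resulting marginal may equal $+\infty$ on an entire region of $P$; this is where the extended-arithmetic conventions and the choice $x_i^\cv(\bff{p})=+\infty$ for infeasible $\bff{p}$ must be reconciled with the hypotheses of the partial-minimization theorem. A secondary point to check is that compactness of $X$ is used only to guarantee that the infimum and supremum are attained (and finite) wherever $F(\bff{p})$ is nonempty, and plays no role in the convexity argument itself.
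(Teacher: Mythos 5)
Your proof is correct: feasibility of $\bff{x}(\bff{p})$ in the defining optimization problem yields the underestimation property, and convexity follows from partial minimization of the (indicator-augmented) linear objective over the jointly convex set $G$, with the empty-feasible-set convention matching $\inf_{\bm{\xi}\in\emptyset}\xi_i=+\infty$. The paper does not reprove this result---it is imported from \cite{cao2023general}---but your argument is essentially the standard one given there, so there is nothing to correct.
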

Observe that for each $\bff{p}\in P$, $x_i^\cv(\bff{p})$ and $x_i^\cc(\bff{p})$ are each described as  optimal-value functions (in the sense of~\cite{thibault1991subdifferentials}) of convex optimization problems. 

\subsection{Lexicographic differentiation}
This subsection introduces Nesterov's lexicographic derivatives (L-derivatives) for nonsmooth functions. These L-derivatives are  subgradients if the underlying function is convex.
\begin{Definition}[from \cite{nesterov2005lexicographic}]\label{Definition:lsmooth}
	\normalfont
	Let $X\subset\R^n$ be open and $\bff{f}:X\to\R^m$ be locally Lipschitz continuous and directionally differentiable at $\bff{x}\in X$. The function $\bff{f}$ is \emph{lexicographically smooth (L-smooth)} at $\bff{x}$ if and only if for any $r\in\mathbb{N}$ and any matrix $\bff{M}:=[\bff{m}_{(1)}\;\bff{m}_{(2)}\;\cdot\cdot\cdot\;\bff{m}_{(r)}]\in\R^{n\times r}$, the following homogenization sequence of functions is well defined:
	\begin{equation}\label{sequence}
		\begin{aligned}
			&\bff{f}^{(0)}_{\bff{x},\bff{M}}:\R^n\to\R^m:\bff{d}\mapsto\bff{f}'(\bff{x};\bff{d}),\\
			&\bff{f}^{(j)}_{\bff{x},\bff{M}}:\R^n\to\R^m:\bff{d}\mapsto[\bff{f}^{(j-1)}_{\bff{x},\bff{M}}]'(\bff{m}_{(j)};\bff{d}),\quad\forall j\in\{1,...,r\}.\\
		\end{aligned}
	\end{equation}
	When the columns of $\bff{M}$ span $\R^n$, $\bff{f}^{(r)}_{\bff{x},\bff{M}}$ is guaranteed to be linear. If $r=n$ and $\bff{M}\in\R^{n\times n}$ is non-singular, then a \emph{lexicographic derivative (L-derivative)} of $\bff{f}$ at $\bff{x}$ in the directions $\bff{M}$, denoted as $\bff{J}_\mrm{L}\bff{f}(\bff{x};\bff{M})$, is defined as
	\begin{equation*}
		\bff{J}_\mrm{L}\bff{f}(\bff{x};\bff{M}):=\bff{J}\bff{f}^{(n)}_{\bff{x},\bff{M}}(\bff{0})\in\R^{m\times n},
	\end{equation*} 
	and the \emph{lexicographic subdifferential} $\partial_\mrm{L}\bff{f}(\bff{x})$ is defined as
	\begin{equation*}
		\begin{aligned}
			\partial_\mrm{L}\bff{f}(\bff{x}):=\left\{\bff{J}_\mrm{L}\bff{f}(\bff{x};\bff{M})\in\R^{m\times n}:\forall\text{ nonsingular }\bff{M}\in\R^{n\times n}\right\}.
		\end{aligned}
	\end{equation*}
\end{Definition}
If $\bff{f}$ is scalar-valued and convex, then we have $\partial_\mrm{L}\bff{f}(\bff{x})\subset\partial\bff{f}(\bff{x})$ (see \cite[Theorem~11]{nesterov2005lexicographic}), and thus any transposed L-derivative is a valid subgradient. The following definition of \emph{lexicographic directional derivative} provides a convenient way for evaluating an L-derivative. 
\begin{Definition}[from \cite{KhanBartonFwdAD}]\label{Definition:LD}
	\normalfont
	Given an open set $X\subset \R^n$, a locally Lipschitz continuous function $\bff{f}:X\to\R^m$ that is L-smooth at $\bff{x}\in X$, and a matrix $\bff{M}:=[\bff{m}_{(1)}\;\cdot\cdot\cdot\;\bff{m}_{(r)}]\in\R^{n\times r}$, define the \emph{lexicographic directional derivative (LD-derivative)} as
	\begin{equation*}
		\begin{aligned}
			\bff{f}'(\bff{x};\bff{M})&:=\left[\bff{f}^{(0)}_{\bff{x},\bff{M}}(\bff{m}_{(1)})\;\bff{f}^{(1)}_{\bff{x},\bff{M}}(\bff{m}_{(2)})\;\cdot\cdot\cdot\;\bff{f}^{(r-1)}_{\bff{x},\bff{M}}(\bff{m}_{(r)})\right].
		\end{aligned}
	\end{equation*}
\end{Definition}
Given $\bff{f}'(\bff{x};\bff{M})$ with square nonsingular $\bff{M}$, an L-derivative $\bff{J}_\mrm{L}\bff{f}(\bff{x};\bff{M})$ can be computed by solving a linear equation system:
\begin{equation}\label{LDL}
	\bff{f}'(\bff{x};\bff{M})=\bff{J}_\mrm{L}\bff{f}(\bff{x};\bff{M})\,\bff{M}.
\end{equation}
When $\bff{M}$ is an identity matrix, this linear equation system is trivial.

We presented a forward AD mode for lexicographic derivative evaluation at the AD2012 conference~\cite{KhanBartonAD,KhanBartonFwdAD}, and a ``branch-locking'' variant at the AD2016 conference~\cite{khan2018branch} that borrows the benefits of the reverse AD mode and handles implicit functions defined in terms of closed-form composite residual functions.
\section{Outline of new subgradient formulations}\label{sec:outline}
The goal of this article is to propose the first approaches for computing subgradients of the implicit function relaxations outlined in Proposition~\ref{Proposition:huiyi}, for aiding in global optimization with implicit function embedded. Our goal is to provide subgradient-based AD rules for implicit function relaxations that are compatible with the subgradient-propagating forward AD mode~\cite{mitsos2009mccormick}.

Our new subgradient evaluation approaches are tailored to different cases, depending on the dimensions of $\bff{x}$ and $\bff{p}$, while making only mild technical assumptions.

Specifically, our contributions are as follows:

\begin{itemize}
	\item \emph{For a scalar-valued implicit function ($n_x=1$):} Here, we demonstrate that when piecewise affine relaxations for the residual function are known, then both the implicit function relaxations and their corresponding subgradients can be efficiently computed in closed form, regardless of the dimension of $\bff{p}$.
	\item \emph{For a vector-valued implicit function ($n_x>1$) with low dimensional parameters ($n_p\le 2$):} In these cases, we propose computing valid subgradients using the directional derivatives of the relaxations. We will introduce a new result showing that the directional derivatives can be computed by solving a linear program (LP).
	\begin{itemize}
		\item For $n_p = 1$, the directional derivative in either direction is a subgradient.
		\item For $n_p = 2$,  subgradients are derived from the directional derivatives in the four cardinal directions.
	\end{itemize}
	\item \emph{For a vector-valued implicit function ($n_x>1$) with high-dimensional parameters ($n_p> 2$):} Here, we calculate a subgradient as an L-derivative (as in Section~\ref{sec:LD}) of the relaxations, by solving a sequence of LPs.
\end{itemize}
\section{Subgradients for ${n_x=1}$}\label{sec:nx=1}
In this section, we consider the case where $n_x = 1$ and \emph{piecewise affine} relaxations $f^\cv$ and $f^\cc$ are employed, as formalized in the following Assumption~\ref{Assumption:special}. Piecewise affine relaxations are commonly used and can be constructed from subgradients~\cite{cao2019convergence} or black-box sampling~\cite{song2021bounding}. It will be shown that in this case, the optimization problems in \eqref{opt:convexrelaxation} are single-variable box-constrained problems, and the implicit function relaxations and the subgradients can be efficiently evaluated in closed form. This approach is inspired from \cite{yuan2023automatic} which studied convex optimal-value functions with box constraints.
\begin{Assumption}\label{Assumption:special}
	\normalfont
	Consider the setup in Definition~\ref{Definition:implicit} and suppose that $n_x=1$. Consider constructing implicit function relaxations ${x}^\cv$ and ${x}^\cc$ following Proposition~\ref{Proposition:huiyi}. Suppose that the relaxations ${{f}}^\cv$ and ${{f}}^\cc$ for ${{f}}$ on $X\times P$ are piecewise affine~\cite{scholtes2012introduction}. That is, for each $(\xi,\bff{p})\in X\times P$, these functions have the form:
	\begin{equation*}
		\begin{aligned}
			f^\cv(\xi,\mathbf{p}) &:= \max\left\{
			\begin{aligned}
				[\mathbf{a}^{\cv,i}]^\top \mathbf{p}
				+ \alpha^{\cv,i}\xi + b^{\cv,i} : i\in\{1,\ldots,k\}
			\end{aligned}
			\right\},\\
			f^\cc(\xi,\mathbf{p}) &:= \min\left\{
			\begin{aligned}
				[\mathbf{a}^{\cc,j}]^\top \mathbf{p}
				+ \alpha^{\cc,j}\xi + b^{\cc,j} :j\in\{1,\ldots,\ell\}
			\end{aligned}
			\right\}.
		\end{aligned}
	\end{equation*}
	
	\ysnote{Let $K^-$ denote the set of indices $i\in\{1,\ldots,k\}$ for which $\alpha^{\cv,i}<0$, and let $K^+$ denote the set of indices $i\in\{1,\ldots,k\}$ for which $\alpha^{\cv,i}>0$. Let $L^-$ denote the set of indices $j\in\{1,\ldots,\ell\}$ for which $\alpha^{\cc,j}<0$, and let $L^+$ denote the set of indices $j\in\{1,\ldots,\ell\}$ for which $\alpha^{\cc,j}>0$.}
\end{Assumption}

Now, under Assumption~\ref{Assumption:special}, define functions \ysnote{$h^\cv,h^\cc:{P}\to\R$} such that for each $\bff{p}\in P$,
\ysnote{
	\begin{equation}\label{def:g}
		\begin{aligned}
			h^\cv(\bff{p})&:=\max\left(
			\{g_i(\bff{p}): i\in K^-\}
			\cup \{\gamma_j(\bff{p}): j\in L^+\}
			\right), \\
			h^\cc(\bff{p})&:=\min\left(
			\{g_i(\bff{p}): i\in K^+\}
			\cup \{\gamma_j(\bff{p}): j\in L^-\}
			\right),
		\end{aligned}
	\end{equation}
	where for each index $i$ and $j$:
	\begin{equation}\label{def:gi}
		\begin{aligned}
			g_i(\bff{p})&:=\frac{-1}{\alpha^{\cv,i}}\left({[\mathbf{a}^{\cv,i}]^\top\mathbf{p}}+ b^{\cv,i}\right), \\
			\gamma_j(\bff{p})&:=\frac{-1}{\alpha^{\cc,j}}\left({[\mathbf{a}^{\cc,j}]^\top\mathbf{p}}+ b^{\cc,j}\right).
		\end{aligned}
\end{equation}}

Observe that \ysnote{$h^\cv$} is a convex piecewise affine function and \ysnote{$h^\cc$} is a concave piecewise affine function.
Then, the optimization problems for computing the implicit function relaxations in ~\eqref{opt:convexrelaxation} can be reformulated to be box-constrained as follows: for each $\bff{p}\in Q$,
\begin{equation}\label{optspecial}
	\begin{aligned}
		x^\cv(\bff{p})&\equiv\min_{{\xi}\in X}\xi\quad\text{s.t.}\quad h^\cv(\bff{p})\le \xi\le h^\cc(\bff{p}),\\
		x^\cc(\bff{p})&\equiv\max_{{\xi}\in X}\xi\quad\text{s.t.}\quad h^\cv(\bff{p})\le \xi\le h^\cc(\bff{p}),
	\end{aligned}
\end{equation}
which has the following closed form:
\begin{equation}\label{optspecial2}
	\begin{aligned}
		x^\cv(\bff{p})&\equiv\max\left\{x^\L,h^\cv(\bff{p})\right\},\\
		x^\cc(\bff{p})&\equiv\min\left\{x^\U,h^\cc(\bff{p})\right\},
	\end{aligned}
\end{equation}
with $X:=[x^\L,x^\U]$. 

The subsequent result follows immediately, and evaluates subgradients of $x^\cv$ and $x^\cc$ based on \eqref{optspecial2}.
\begin{proposition}\label{prop:special}
	Suppose that Assumption~\ref{Assumption:special} holds and consider the formulation~\eqref{optspecial2} where the functions $h^\cv$ and $h^\cc$ are defined in \eqref{def:g} and \eqref{def:gi}. Then, for an arbitrary $\bff{p}\in \mathrm{int}(Q)$, a subgradient $\bff{s}^\cv\in\R^{n_p}$ of $x^\cv$ at $\bff{p}$ is given by:
	\ysnote{
		\begin{itemize}
			\item $\bff{s}^\cv :=\bff{0}$ if $ x^\cv(\bff{p}) = x^\L$; 
			\item otherwise, if $x^\cv(\bff{p})=g_i(\bff{p})$ for some $i\in K^-$, then $\bff{s}^\cv := -\frac{\bff{a}^{\cv,i}}{\alpha^{\cv,i}}$;
			\item otherwise, there exists $j\in L^+$ such that $x^\cv(\bff{p})=\gamma_j(\bff{p})$ and we have $\bff{s}^\cv := -\frac{\bff{a}^{\cc,j}}{\alpha^{\cc,j}}$.  
		\end{itemize}	
	Similarly, a subgradient $\bff{s}^\cc\in\R^{n_p}$ of $x^\cc$ at $\bff{p}$ is given by:
	\begin{itemize}
		\item $\bff{s}^\cc:=\bff{0}$ if $ x^\cc(\bff{p}) = x^\U$; 
		\item otherwise, if $x^\cc(\bff{p})=\gamma_j(\bff{p})$ for some $j\in L^-$, then $\bff{s}^\cv := -\frac{\bff{a}^{\cc,j}}{\alpha^{\cc,j}}$;
		\item otherwise,
		there exists $i\in K^+$ such that $x^\cc(\bff{p})=g_i(\bff{p})$ and we have $\bff{s}^\cv := -\frac{\bff{a}^{\cv,i}}{\alpha^{\cv,i}}$.

\end{itemize}}
\end{proposition}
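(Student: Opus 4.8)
The plan is to recognize the closed-form expression for $x^\cv$ in~\eqref{optspecial2} as a pointwise maximum of finitely many affine functions of $\bff{p}$, and then to apply the standard subgradient rule for such pointwise maxima. I would treat $\bff{s}^\cv$ in detail; the argument for $\bff{s}^\cc$ is entirely symmetric, viewing $x^\cc$ instead as a pointwise minimum of affine functions.

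First I would substitute~\eqref{def:g} into~\eqref{optspecial2} to write, for each $\bff{p}\in Q$,
\begin{equation*}
	x^\cv(\bff{p}) = \max\bigl(\{x^\L\}\cup\{g_i(\bff{p}):i\in K^-\}\cup\{\gamma_j(\bff{p}):j\in L^+\}\bigr)=:G(\bff{p}).
\end{equation*}
By~\eqref{def:gi}, each $g_i$ and each $\gamma_j$ is affine in $\bff{p}$, with constant gradients $\nabla g_i=-\bff{a}^{\cv,i}/\alpha^{\cv,i}$ and $\nabla\gamma_j=-\bff{a}^{\cc,j}/\alpha^{\cc,j}$, while the constant piece $x^\L$ has gradient $\bff{0}$. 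Hence $G$ is a finite convex piecewise affine function on all of $P$.

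The one subtlety I would need to address is that $x^\cv$ and $G$ need not coincide off $Q$: by the convention in Proposition~\ref{Proposition:huiyi}, $x^\cv(\bff{p})=+\infty$ whenever~\eqref{opt:convexrelaxation} is infeasible, whereas $G$ stays finite there. I would resolve this with a domination argument. Since $x^\cv=G$ on $Q$ and $x^\cv=+\infty\ge G$ on $P\setminus Q$, we have $G\le x^\cv$ throughout $P$, with equality at the chosen point $\bff{p}\in\mathrm{int}(Q)$. Therefore any subgradient $\bff{s}$ of $G$ at $\bff{p}$ satisfies, for every $\bm{\xi}\in P$,
\begin{equation*}
	x^\cv(\bm{\xi})\ge G(\bm{\xi})\ge G(\bff{p})+\langle\bff{s},\bm{\xi}-\bff{p}\rangle=x^\cv(\bff{p})+\langle\bff{s},\bm{\xi}-\bff{p}\rangle,
\end{equation*}
so $\bff{s}$ is also a subgradient of $x^\cv$ at $\bff{p}$. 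It thus suffices to produce a subgradient of the genuinely convex function $G$.

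Finally I would invoke the standard rule that the subdifferential of a finite pointwise maximum of convex functions at $\bff{p}$ is the convex hull of the subdifferentials of the pieces active at $\bff{p}$ (see, e.g.,~\cite{rockafellar2015convex}); for affine pieces this means the gradient of any single active piece is a valid subgradient of $G$. The case distinction in the statement simply names one active piece in each situation: the constant piece $x^\L$ is active when $x^\cv(\bff{p})=x^\L$ (yielding $\bff{s}^\cv=\bff{0}$); otherwise $g_i$ is active for some $i\in K^-$ when $x^\cv(\bff{p})=g_i(\bff{p})$ (yielding $\bff{s}^\cv=-\bff{a}^{\cv,i}/\alpha^{\cv,i}$); and otherwise $\gamma_j$ is active for some $j\in L^+$ (yielding $\bff{s}^\cv=-\bff{a}^{\cc,j}/\alpha^{\cc,j}$), these three cases being exhaustive since $G(\bff{p})$ must equal at least one of its pieces. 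Each selected gradient matches the claimed $\bff{s}^\cv$, completing the convex case, and the concave case follows by the mirror-image argument. I do not anticipate a deep obstacle here, consistent with the paper calling the result immediate; the only genuinely non-routine step is the domination argument that bridges the gap between the finite function $G$ and the extended-valued relaxation $x^\cv$ on $P\setminus Q$.
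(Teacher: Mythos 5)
Your proof is correct and matches the argument the paper intends: the paper gives no explicit proof, asserting the result ``follows immediately'' from the closed form \eqref{optspecial2}, which is precisely the pointwise-maximum-of-affine-pieces structure you exploit via the standard active-piece subgradient rule. Your extra domination step handling the extended values of $x^\cv$ on $P\setminus Q$ is a careful addition but not a departure from the paper's approach.
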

\section{Subgradients for ${n_x>1}$}\label{sec:nx>1}
This section considers the case where $n_x>1$. This case is further divided into three scenarios depending on the dimension of $\bff{p}$, namely $n_p=1$, $n_p=2$, and $n_p>2$. We restrict ourselves to the following setting for the underlying implicit function relaxation system.
\begin{Assumption}\label{Assumption:slater}
	\normalfont
	Consider the setup in Definition~\ref{Definition:implicit}, and consider constructing implicit function relaxations $\bff{x}^\cv$ and $\bff{x}^\cc$ following Proposition~\ref{Proposition:huiyi}. Without loss of generality, assume that $\bff{f}\equiv(\tilde{\bff{f}},\bff{h})$ where the functions $\tilde{\bff{f}}:\R^{n_x}\times\R^{n_p}\to\R^{n_{\tilde{f}}}$ and  $\bff{h}:\R^{n_x}\times\R^{n_p}\to\R^{n_h}$ satisfy that $\bff{h}$ is affine and for each $i\in\{1,...,n_{\tilde{f}}\}$, $\tilde{f}_i$ is not affine. Suppose that for each $i\in\{1,...,n_{\tilde{f}}\}$, the convex and concave relaxations $\tilde{{f}}_i^\cv,\tilde{{f}}_i^\cc$ for $\tilde{{f}}_i$ on $X\times P$ are piecewise differentiable in the sense of Scholtes~\cite{scholtes2012introduction}; that is there exist continuously differentiable convex functions $\tilde{{f}}_i^{\cv,1},\tilde{{f}}_i^{\cv,2},...,\tilde{{f}}_i^{\cv,k_i}:\R^{n_x}\times \R^{n_p}\to\R$ such that for each $(\bm{\xi},\bff{p})\in X\times P$,
	\begin{equation*}
		\tilde{f}_i^{\cv}(\bm{\xi},\bff{p})\equiv\max\left\{\tilde{f}_i^{\cv,j}(\bm{\xi},\bff{p}): j\in\{1,...,k_i\}\right\},
	\end{equation*}
	and similarly, there exist continuously differentiable concave functions $\tilde{{f}}_i^{\cc,1},\tilde{{f}}_i^{\cc,2},...,\tilde{{f}}_i^{\cv,l_i}:\R^{n_x}\times \R^{n_p}\to\R$ such that
	\begin{equation*}
		\tilde{f}_i^{\cc}(\bm{\xi},\bff{p})\equiv\min\left\{\tilde{f}_i^{\cc,j}(\bm{\xi},\bff{p}):j\in\{1,...,l_i\}\right\}.
	\end{equation*}
\end{Assumption}

\ysnote{Under Assumption~\ref{Assumption:slater} and Proposition~\ref{Proposition:huiyi}}, we have for each $i\in\{1,...,n_x\}$ and $\bff{p}\in Q$:
\begin{equation}\label{opt:convexrelaxationreform}
	\begin{aligned}
		x_{{i}}^\cv({\bff{p}})\equiv\min_{\bm{\xi}\in X}\quad&\xi_{{i}}\\
		\text{s.t.}\quad&\bff{h}(\bm{\xi},{\bff{p}})=\bff{0},\\&\forall \tilde{i}\in\{1,...,n_{\tilde{f}}\},\\
		&\tilde{f}_{\tilde{i}}^{\cv,j}(\bm{\xi},{\bff{p}})\le 0,\quad\forall j\in\{1,...,k_{\tilde{i}}\},\\
		&\tilde{f}_{\tilde{i}}^{\cc,j}(\bm{\xi},{\bff{p}})\ge 0,\quad\forall j\in\{1,...,l_{\tilde{i}}\}.		
	\end{aligned}
\end{equation}
Similarly, $x_{{i}}^\cc(\bff{p})$ is constructed using the above formulation with ``max'' in place of ``min''. For notational simplicity, define a function  $\bff{g}:\R^{n_x}\times \R^{n_p}\to\R^{n_g}$ with $n_g:=\sum_{i=1}^{n_{\tilde{f}}}\left(k_i+l_i\right)$ to achieve a compact representation of all inequality constraints in \eqref{opt:convexrelaxationreform} as follows:
\begin{equation}\label{opt:convexrelaxationreform1}
	\begin{aligned}
		x_{{i}}^\cv({\bff{p}})\equiv\min_{\bm{\xi}\in \R^{n_x}}\quad&\xi_{{i}}\\
		\text{s.t.}\quad&\bff{h}(\bm{\xi},{\bff{p}})=\bff{0},\\
		&\bff{g}(\bm{\xi},\bff{p})\le \bff{0}.
	\end{aligned}
\end{equation}
Note that the  ``$\bm{\xi}\in X$'' constraint is incorporated into the vector function $\bff{g}$.
Denote the $\bff{p}$-dependent feasible region of the optimization problem above as $\Omega(\bff{p})$. We will enforce the following assumption as well.
\ysnote{
	\begin{Assumption}\label{Assumption:slater2}
		\normalfont
		Suppose that Assumption~\ref{Assumption:slater} holds, and consider some $\hat{\bff{p}}\in\mathrm{int}(Q)$. Since $\bff{h}$ is affine, let $\bff{a}_i$ denote the constant gradient of each component $h_i$ with respect to $\bm{\xi}\in\R^{n_x}$. Suppose that the vectors $\bff{a}_i$ for $i\in\{1,...,n_h\}$ are linearly independent. Suppose that there exists a neighborhood $N_{\hat{\bff{p}}}\subset Q$ of $\hat{\bff{p}}$ such that for each $\bff{p}\in N_{\hat{\bff{p}}}$, there exists $\bm{\xi}_\bff{p}\in X$ (dependent on $\bff{p}$) such that ${\bff{h}}(\bm{\xi},{\bff{p}})=\bff{0}$ and $\bff{g}(\bm{\xi},\bff{p})<\bff{0}$. 
\end{Assumption}}

Assumptions~\ref{Assumption:slater}~and~\ref{Assumption:slater2} impose two additional requirements on the implicit function relaxation system. The first is that the employed relaxations for $\bff{f}$ must be piecewise differentiable. Besides the piecewise affine relaxations mentioned in Assumption~\ref{Assumption:special}, many established convex relaxations are piecewise differentiable, such as McCormick relaxations~\cite{mccormick1976computability,scott2011generalized}, $\alpha$BB relaxations~\cite{adjiman1998global1}, and convex envelopes~\cite{khajavirad2012convex,khajavirad2013convex}. The second requirement is the strong Slater condition, which is commonly enforced when guaranteeing strong duality in convex optimization~(see \cite{boyd2004convex}). 

Throughout this section, for ease of exposition, consider an arbitrary $i\in\{1,...,n_x\}$ and consider computing a subgradient of $x_i^\cv$ at $\hat{\bff{p}}$ under Assumption~\ref{Assumption:slater2}. The new subgradient evaluation approaches can be readily applied to the concave implicit function relaxation, with $-x^\cc_i$ in place of $x_i^\cv$.

The following result shows that a directional derivative of $x_i^\cv$ at $\hat{\bff{p}}$ can be computed by constructing and solving an auxiliary linear program (LP). This result serves as the foundation of all subsequent subgradient evaluation approaches in this section. 
\begin{proposition}\label{prop:direct}
	Under Assumption~\ref{Assumption:slater2}, the function $x_i^\cv$ is locally Lipschitz continuous and directionally differentiable at $\hat{\bff{p}}$. Moreover, consider an arbitrary optimal solution $\hat{\bm{\xi}}$ of the optimization problem in \eqref{opt:convexrelaxationreform1}. Then, for each direction $\bff{d}\in\R^{n_p}$,
	\begin{equation}\label{opt:direct}
		\begin{aligned}
			[x_i^\cv]'(\hat{\bff{p}};\bff{d})&\equiv\min_{\bff{w}\in\R^{n_x}}\quad w_i\\
			\text{s.t.}\quad &[\nabla_\bff{x}g_i(\hat{\bm{\xi}},\hat{\bff{p}})]^\T\bff{w}\le-[\nabla_\bff{y}g_i(\hat{\bm{\xi}},\hat{\bff{p}})]^\T\bff{d},\quad\forall i\in\{1,...,n_g\}\text{ such that }g_i(\hat{\bm{\xi}},\hat{\bff{p}})=0,\\
			&[\nabla_\bff{x}h_j(\hat{\bm{\xi}},\hat{\bff{p}})]^\T\bff{w}=-[\nabla_\bff{y}h_j(\hat{\bm{\xi}},\hat{\bff{p}})]^\T\bff{d},\quad\forall j\in\{1,...,l\}.\\
		\end{aligned}
	\end{equation}
\end{proposition}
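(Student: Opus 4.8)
The plan is to establish the two conclusions---local Lipschitz continuity with directional differentiability, and the LP characterization of the directional derivative---together, by recognizing $x_i^\cv$ as the optimal-value function of a convex parametric program in which the objective $\xi_i$ is linear and the constraints $g_j$ and $h_j$ are continuously differentiable (convex $g_j$, affine $h_j$). Under Assumption~\ref{Assumption:slater2}, the strong Slater condition holds in a neighborhood $N_{\hat{\bff{p}}}$ and the affine constraint gradients $\bff{a}_i$ are linearly independent, so the feasible-region map $\bff{p}\mapsto\Omega(\bff{p})$ is well-behaved (nonempty-valued and continuous in the relevant sense) near $\hat{\bff{p}}$. First I would invoke the established sensitivity theory for convex optimal-value functions under a Slater-type constraint qualification---the framework of~\cite{thibault1991subdifferentials} together with the nonsmooth optimal-value results of~\cite{song2024generalized}---to conclude that $x_i^\cv$ is locally Lipschitz and directionally differentiable at $\hat{\bff{p}}$. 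The constant objective Lipschitz modulus, the continuity of the constraint data, and the uniform Slater point supply exactly the hypotheses these theorems require.

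The core step is deriving the LP~\eqref{opt:direct}. Fixing an optimal solution $\hat{\bm{\xi}}$ of~\eqref{opt:convexrelaxationreform1} at $\hat{\bff{p}}$, I would perturb the parameter as $\hat{\bff{p}}+\alpha\bff{d}$ and study how the optimal $\bm{\xi}$ must move to first order. Writing $\bm{\xi}=\hat{\bm{\xi}}+\alpha\bff{w}+o(\alpha)$ and linearizing each active inequality $g_j$ and each equality $h_j$ about $(\hat{\bm{\xi}},\hat{\bff{p}})$ produces precisely the feasibility system in~\eqref{opt:direct}: active inequalities must satisfy $[\nabla_\bff{x}g_j]^\T\bff{w}\le-[\nabla_\bff{y}g_j]^\T\bff{d}$ (inactive ones remain slack for small $\alpha$ and impose no constraint), and the affine equalities yield $[\nabla_\bff{x}h_j]^\T\bff{w}=-[\nabla_\bff{y}h_j]^\T\bff{d}$. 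Since the objective is the linear functional $\xi_i$, its first-order change along any admissible perturbation is exactly $w_i$, so minimizing $w_i$ over this polyhedral set is the natural candidate for $[x_i^\cv]'(\hat{\bff{p}};\bff{d})$. The formal equality would be argued by a two-sided bound: the ``$\le$'' direction (upper bound on the directional derivative) by constructing a feasible path $\bm{\xi}(\alpha)$ realizing any LP-feasible $\bff{w}$, using the Slater point to restore strict feasibility of the linearized-but-curved constraints; the ``$\ge$'' direction by showing any optimal perturbation path induces an LP-feasible $\bff{w}$ whose $i$th component lower-bounds the realized decrease.

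The main obstacle I anticipate is the rigorous interchange between the exact parametric problem and its linearization---i.e., showing that the LP value equals the genuine directional derivative rather than merely bounding it, and that feasibility of the linearized system is both necessary and sufficient to first order. The delicate point is constructing, for each LP-feasible $\bff{w}$, an admissible curve $\bm{\xi}(\alpha)\in\Omega(\hat{\bff{p}}+\alpha\bff{d})$ with $\bm{\xi}(\alpha)=\hat{\bm{\xi}}+\alpha\bff{w}+o(\alpha)$ despite the curvature of the convex $g_j$. This is exactly where the uniform strong Slater condition of Assumption~\ref{Assumption:slater2} is essential: a small convex combination with the strictly feasible $\bm{\xi}_{\bff{p}}$ absorbs the $o(\alpha)$ curvature terms and restores feasibility without perturbing the first-order behavior. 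I would handle this via a standard perturbation-recovery argument, and separately verify that the resulting LP is feasible (inheriting the Slater point's linearization) and bounded below (so the value is finite, consistent with directional differentiability); linear independence of the $\bff{a}_i$ guarantees the equality block is consistent. The remaining verifications---that the LP value is positively homogeneous and subadditive in $\bff{d}$, matching the known properties of directional derivatives of convex optimal-value functions---follow routinely from the LP structure.
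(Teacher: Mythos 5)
The paper does not prove this proposition from first principles: its entire proof is the observation that the statement is an instance of \cite[Theorem~3~and~Proposition~1]{song2024generalized}. Your attempt to reconstruct the underlying argument is therefore necessarily a different route. Your skeleton is the right one --- local Lipschitz continuity and directional differentiability from convex optimal-value sensitivity theory under the strong Slater condition of Assumption~\ref{Assumption:slater2}, then identification of the directional derivative with the value of the linearized problem via a two-sided bound --- and your upper-bound direction (recovering, from any LP-feasible $\bff{w}$, an admissible curve $\bm{\xi}(\alpha)\in\Omega(\hat{\bff{p}}+\alpha\bff{d})$ with the correct first-order behavior by mixing in the Slater point to absorb the $o(\alpha)$ curvature of the convex $g_j$) is the standard and correct construction.

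The genuine gap is in your lower-bound direction, and it sits exactly at the feature the proposition (and the paper's subsequent Remark) emphasizes: $\hat{\bm{\xi}}$ is an \emph{arbitrary} optimal solution of \eqref{opt:convexrelaxationreform1}. You propose to show that ``any optimal perturbation path induces an LP-feasible $\bff{w}$,'' i.e., to take optimal solutions $\bm{\xi}(\alpha)$ of the problems at $\hat{\bff{p}}+\alpha\bff{d}$ and extract $\bff{w}$ from the difference quotients $(\bm{\xi}(\alpha)-\hat{\bm{\xi}})/\alpha$. But when the primal solution set at $\hat{\bff{p}}$ is not a singleton, the perturbed solutions need not approach your chosen $\hat{\bm{\xi}}$ at all, so these difference quotients are generally unbounded and furnish no feasible point of the LP posed at $\hat{\bm{\xi}}$. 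This is precisely why the ``arbitrary optimal solution'' claim is nontrivial. The cited result circumvents it by working with the dual of the linearized problem --- a maximum over Lagrange multipliers --- together with the fact that for a convex program satisfying Slater's condition the multiplier set does not depend on which primal optimal solution is used to write the KKT system. Without that device (or an equivalent one), your two-sided bound only establishes the formula when $\hat{\bm{\xi}}$ happens to be a limit of perturbed optimal solutions, which is strictly weaker than what is claimed and would undermine the implementation point made in the Remark that any solver-returned optimum may be used.
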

\begin{proof}
	This result follows directly from \cite[Theorem~3~and~Proposition~1]{song2024generalized}.
\end{proof}
\begin{Remark}
	\normalfont
	We highlight that \eqref{opt:direct} admits any optimal solution $\hat{\bm{\xi}}$. This is particularly beneficial for implementation, because it means that any optimal solution returned by a numerical optimization solver can be used for computing the directional derivative.
\end{Remark}

Now, we consider cases separately depending on the value of $n_p$.
\subsection{The case in which ${n_p=1}$:}
In this case, the directional derivative along the direction $1$ or $-1$ itself is a valid subgradient, which is formalized below.

\begin{proposition}
	Suppose that Assumption~\ref{Assumption:slater2} holds and $n_p=1$, then for an arbitrary $d\in\{1,-1\}$, the directional derivative $x_i^\cv(\hat{p};d)$ described in Proposition~\ref{prop:direct} is a valid subgradient of $x_i^\cv$ at $\hat{p}$. 
\end{proposition}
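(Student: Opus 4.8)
The plan is to exploit the fact that, with $n_p=1$, the relaxation $x_i^\cv$ is a convex function of a \emph{single} real variable, for which the subdifferential at an interior point is completely characterized by its one-sided derivatives. First I would note that $P\subset\R$ is a compact interval and that $\hat{p}\in\mathrm{int}(Q)\subseteq\mathrm{int}(P)$, so $\hat{p}$ is interior to the domain. By Proposition~\ref{Proposition:huiyi}, $x_i^\cv$ is convex on $P$, and by Proposition~\ref{prop:direct} it is locally Lipschitz continuous (hence finite-valued on a neighborhood of $\hat{p}$) and directionally differentiable at $\hat{p}$. These are exactly the hypotheses under which classical univariate convex analysis applies.

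Next I would invoke the standard characterization (e.g.~\cite[Theorem~24.1]{rockafellar2015convex}) that, for a convex function of one real variable that is finite near an interior point $\hat{p}$, the subdifferential is the closed interval bounded by the left and right derivatives,
\begin{equation*}
	\partial x_i^\cv(\hat{p})=\left[\,[x_i^\cv]'_-(\hat{p}),\ [x_i^\cv]'_+(\hat{p})\,\right],
\end{equation*}
and that both endpoints of this interval are themselves subgradients. The remaining task is to match these endpoints to the directional derivatives supplied by Proposition~\ref{prop:direct}. Directly from Definition~\ref{Definition:direcderiv}, the choice $d=1$ yields the right derivative, $[x_i^\cv]'(\hat{p};1)=[x_i^\cv]'_+(\hat{p})$, which is the right endpoint of $\partial x_i^\cv(\hat{p})$ and hence a subgradient. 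The choice $d=-1$ yields $[x_i^\cv]'(\hat{p};-1)=-[x_i^\cv]'_-(\hat{p})$, from which the left endpoint $[x_i^\cv]'_-(\hat{p})$ is recovered.

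The one genuine point requiring care---and what I expect to be the crux of the writeup---is the sign bookkeeping for $d=-1$: the raw directional-derivative value equals $-[x_i^\cv]'_-(\hat{p})$ rather than $[x_i^\cv]'_-(\hat{p})$, so one must account for the direction $d$ itself, equivalently observing that the sought subgradient is $d\,[x_i^\cv]'(\hat{p};d)$, which for $d\in\{1,-1\}$ collapses to the correct endpoint since $d^2=1$. To keep the argument self-contained and to sidestep any ambiguity in invoking the cited characterization, I would verify the defining subgradient inequality $x_i^\cv(p)\ge x_i^\cv(\hat{p})+s\,(p-\hat{p})$ directly from convexity for each candidate endpoint $s$, treating the cases $p>\hat{p}$ and $p<\hat{p}$ separately and using the monotonicity of the difference quotients of a convex function. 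This confirms that each endpoint of $\partial x_i^\cv(\hat{p})$ satisfies the subgradient inequality globally on $P$, completing the proof.
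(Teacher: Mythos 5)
The paper offers no proof of this proposition at all---it is stated as immediate---so there is nothing to compare your argument against line by line; your route through the one-sided derivatives of a univariate convex function is the natural one and is correct. Convexity of $x_i^\cv$ comes from Proposition~\ref{Proposition:huiyi}, finiteness and directional differentiability near $\hat{p}$ come from Proposition~\ref{prop:direct}, and the identification $\partial x_i^\cv(\hat{p})=\bigl[[x_i^\cv]'_-(\hat{p}),\,[x_i^\cv]'_+(\hat{p})\bigr]$ at an interior point does the rest.

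More importantly, your ``sign bookkeeping'' observation is not a cosmetic point: it exposes that the proposition, read literally, is false for $d=-1$. The raw value $[x_i^\cv]'(\hat{p};-1)=-[x_i^\cv]'_-(\hat{p})$ need not lie in $\partial x_i^\cv(\hat{p})$; e.g., if $x_i^\cv$ happened to be differentiable at $\hat{p}$ with derivative $s\neq 0$, the subdifferential is $\{s\}$ while $[x_i^\cv]'(\hat{p};-1)=-s$. The correct claim is that $d\,[x_i^\cv]'(\hat{p};d)$ is a subgradient for either $d\in\{1,-1\}$, exactly as you write. This is also what the paper's own machinery would produce: taking $\bff{M}=[d]$ in Theorem~\ref{Theorem:marginal} and solving \eqref{LDL} divides the LD-derivative by $d$, and the $n_p=2$ formula in Proposition~\ref{prop:np2} likewise carries the compensating minus sign on the $-\bff{e}^{(j)}$ terms. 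So your proof is correct, and it establishes the statement the authors evidently intended rather than the one they wrote; flagging that discrepancy is the most valuable part of your writeup.
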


\subsection{The case in which ${n_p=2}$:}\label{sec:np2}
The following result is obtained by specializing \cite[Theorem~3.3]{yuan2020constructing} to the implicit function relaxation $x_i^\cv$, and shows that a valid subgradient can be computed from directional derivatives from the four cardinal directions. Recall a directional derivative can be evaluated by solving an auxiliary LP outlined in Proposition~\ref{prop:direct}.
\begin{proposition}\label{prop:np2}
	Suppose that Assumption~\ref{Assumption:slater2} holds and $n_p=2$. Then, a subgradient of $x_i^\cv$ at $\hat{\bff{p}}$ is given by:
	\begin{equation*}
		\frac{1}{2}	\begin{bmatrix}
			[x_i^\cv]'(\hat{\bff{p}};\bff{e}^{(1)})-[x_i^\cv]'(\hat{\bff{p}};-\bff{e}^{(1)})\\
			[x_i^\cv]'(\hat{\bff{p}};\bff{e}^{(2)})-[x_i^\cv]'(\hat{\bff{p}};-\bff{e}^{(2)}).
		\end{bmatrix}
	\end{equation*} 
\end{proposition}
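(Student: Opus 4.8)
The plan is to reduce Proposition~\ref{prop:np2} to the cited result \cite[Theorem~3.3]{yuan2020constructing}, whose hypotheses are that the underlying function is convex on a neighborhood of $\hat{\bff{p}}$ and directionally differentiable there. First I would establish that $x_i^\cv$ satisfies these hypotheses. Proposition~\ref{Proposition:huiyi} already guarantees that $x_i^\cv$ is a convex relaxation, hence convex on $P$, and since $\hat{\bff{p}}\in\mathrm{int}(Q)$ with a neighborhood $N_{\hat{\bff{p}}}\subset Q$ supplied by Assumption~\ref{Assumption:slater2}, the function is finite and convex on that neighborhood. Directional differentiability at $\hat{\bff{p}}$ in every direction $\bff{d}\in\R^{2}$ is exactly the conclusion of Proposition~\ref{prop:direct}, which applies under Assumption~\ref{Assumption:slater2}. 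With these two properties in hand, the abstract result of \cite{yuan2020constructing} is directly invokable.

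The heart of the argument is the characterization of a subgradient of a finite convex function on $\R^2$ in terms of its one-sided directional derivatives along the coordinate axes. The key fact is that for a convex function $\varphi$, the map $\bff{d}\mapsto\varphi'(\hat{\bff{p}};\bff{d})$ is itself convex and positively homogeneous, and equals the support function of the subdifferential $\partial\varphi(\hat{\bff{p}})$; concretely, $\varphi'(\hat{\bff{p}};\bff{d})=\max\{\langle\bff{s},\bff{d}\rangle:\bff{s}\in\partial\varphi(\hat{\bff{p}})\}$. Evaluating this at $\bff{d}=\pm\bff{e}^{(1)}$ and $\bff{d}=\pm\bff{e}^{(2)}$ gives bounds on the first and second components of any subgradient, and the claim is that the specific averaged vector in the statement lies in the interval box those bounds describe and, crucially, is itself a valid subgradient. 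I would verify the subgradient inequality directly: for each $\bm{\xi}\in N_{\hat{\bff{p}}}$, write $\bm{\xi}-\hat{\bff{p}}$ in coordinates and use convexity together with the one-dimensional directional-derivative inequality $\varphi(\hat{\bff{p}}+t\bff{e}^{(k)})\ge\varphi(\hat{\bff{p}})+t\,\varphi'(\hat{\bff{p}};\bff{e}^{(k)})$ for $t>0$ and the analogous inequality for $t<0$ using $-\bff{e}^{(k)}$.

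The main obstacle is that the averaged formula is not a subgradient of a general convex function on $\R^2$, so the proof must exploit the special structure that makes it valid here. The essential point, and where \cite{yuan2020constructing} does its real work, is that $x_i^\cv$ is \emph{piecewise affine-like} in the relevant sense near $\hat{\bff{p}}$: because the active constraints in \eqref{opt:convexrelaxationreform1} come from piecewise-differentiable relaxations, the directional-derivative function $\bff{d}\mapsto[x_i^\cv]'(\hat{\bff{p}};\bff{d})$ is piecewise linear, so its behavior is pinned down by finitely many directions, and the four cardinal values suffice to recover a genuine subgradient through the symmetric averaging. Thus my plan is to state these two verified hypotheses (convexity on $N_{\hat{\bff{p}}}$ and directional differentiability via Proposition~\ref{prop:direct}), cite \cite[Theorem~3.3]{yuan2020constructing} for the subgradient formula, and, if a self-contained argument is wanted, supply the direct verification of the subgradient inequality sketched above, with the piecewise-linearity of the directional-derivative map identified as the structural ingredient that prevents the averaging from failing.
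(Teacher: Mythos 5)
Your overall skeleton matches the paper's: verify that $x_i^\cv$ is convex near $\hat{\bff{p}}$ (from Proposition~\ref{Proposition:huiyi}) and directionally differentiable there (from Proposition~\ref{prop:direct}), then invoke the Yuan--Khan result; the paper's proof is exactly this one-line citation (to \cite[Corollary~3.6]{yuan2020constructing}). However, your third paragraph contains a genuine misconception that would derail a written-out proof. You assert that ``the averaged formula is not a subgradient of a general convex function on $\R^2$'' and that the argument ``must exploit the special structure'' of piecewise-linear directional derivatives. This is false, and it inverts the content of the cited theorem: the whole point of \cite{yuan2020constructing} is that for \emph{any} finite convex function of \emph{two} variables, the vector whose $k$-th entry is $\tfrac{1}{2}\bigl(\varphi'(\hat{\bff{p}};\bff{e}^{(k)})-\varphi'(\hat{\bff{p}};-\bff{e}^{(k)})\bigr)$ is a subgradient. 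Geometrically, since $\varphi'(\hat{\bff{p}};\cdot)$ is the support function of $\partial\varphi(\hat{\bff{p}})$, this vector is the center of the coordinate bounding box of the subdifferential, and the theorem says that for a compact convex set in the plane this center always lies in the set --- a genuinely two-dimensional fact (it fails for, e.g., a simplex in $\R^3$, which is why the paper switches to L-derivatives for $n_p>2$). No piecewise-linearity of $\bff{d}\mapsto[x_i^\cv]'(\hat{\bff{p}};\bff{d})$ is needed or used.

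Relatedly, the ``self-contained'' verification you sketch would not go through: adding the two one-dimensional inequalities $\varphi(\hat{\bff{p}}+t\bff{e}^{(k)})\ge\varphi(\hat{\bff{p}})+t\,\varphi'(\hat{\bff{p}};\bff{e}^{(k)})$ cannot yield the subgradient inequality at a general point $\bm{\xi}=\hat{\bff{p}}+t_1\bff{e}^{(1)}+t_2\bff{e}^{(2)}$, because $\varphi$ is not separable and convexity does not let you decouple the two coordinate moves that way. If you want a self-contained argument, the right route is the support-function/bounding-box argument above, not a coordinatewise telescoping. With the false structural claim removed and the citation left to do the work it actually does, your proof coincides with the paper's.
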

\begin{proof}
	This result follows directly from \cite[Corollary~3.6]{yuan2020constructing}.
\end{proof}
\subsection{The case in which ${n_p>2}$:}\label{sec:LDapproach}
This case is more complicated than the previous two, as the subgradient evaluation methods proposed previously are not applicable. Therefore, we shift our attention to computing an L-derivative of $x_i^\cv$ at $\hat{\bff{p}}$. Since $x_i^\cv$ is convex, an L-derivative is a valid subgradient. Roughly, this is done by specializing relevant results in \cite{song2024generalized}, which provided nonsmooth forward AD rules of parameterized convex programs.

The key to compute an L-derivative is to construct the homogenization function sequence~\eqref{sequence}, starting from the directional derivative. Note that the directional derivative of $x_i^\cv$ has already been established in Proposition~\ref{prop:direct}. To proceed, we simplify the notation in \eqref{opt:direct} as follows:
\begin{equation}\label{eq:psi}
	\begin{aligned}
		[x_i^\cv]'(\hat{\bff{p}};\bff{d})\equiv \min_{\bff{w}\in\R^{n_x}}\quad &[\bff{e}^{(i)}]^\T \bff{w}\\
		\text{s.t.}\quad &\bff{A}\bff{w}\le\bff{G}^\A\bff{d},\\
		&\bff{B}\bff{w}=\bff{G}^\B\bff{d},
	\end{aligned}
\end{equation}
where $\bff{A}\in\R^{n_A\times n_x},\bff{B}\in\R^{n_B\times n_x},\bff{G}^\A\in\R^{n_A\times n_p},\bff{G}^\B\in\R^{n_B\times n_p}$ are straightforwardly constructed based on the constraints in \eqref{opt:direct}. Based on \eqref{eq:psi}, the following result evaluates an LD-derivative of $x_i^\cv$ by solving a sequence of LPs. 
\begin{theorem}\label{Theorem:marginal}
	Suppose that Assumption~\ref{Assumption:slater2} holds and consider all quantities defined in \eqref{eq:psi}. Then, the function $x_i^\cv$ is L-smooth at $\hat{\bff{p}}$, and for any $\bff{M}\in\R^{n_p\times r}$, the following sequence of functions 
	\begin{equation*}
		\begin{aligned}
			&[x_i^\cv]^{(0)}_{\hat{\bff{p}},\bff{M}}:{\R^{n_p}\to\R}:\bff{d}\mapsto[x_i^\cv]'(\hat{\bff{p}};\bff{d}),\\
			&[x_i^\cv]^{(j)}_{\hat{\bff{p}},\bff{M}}:{\R^{n_p}\to\R}:\bff{d}\mapsto\left[[x_i^\cv]^{(j-1)}_{\hat{\bff{p}},\bff{M}}\right]'(\bff{m}_{(j)};\bff{d}),\quad 	\forall j\in\{1,...,r\},\\
		\end{aligned}
	\end{equation*}
	are well-defined and given by: for each $j\in\{0,...,r\}$,
	\begin{equation}\label{sequencepsi}
		\begin{aligned}
			[x_i^\cv]^{(j)}_{\hat{\bff{p}},\bff{M}}:&\R^{n_p}\to\R:{\bff{d}}\mapsto\max_{\bm{\lambda}^\A,\bm{\lambda}^\B}\quad[\bff{G}{\bff{d}}]^\T\bm{\lambda}\\ 
			\text{s.t.}\quad& \bff{A}^\T\bm{\lambda}^\A + \bff{B}^\T\bm{\lambda}^\B = \bff{e}^{(i)},\\
			&\bm{\lambda}^\A\le \bff{0},\\
			&\begin{bmatrix}
				-[\bff{G}\bff{m}_{(1)}]^\T\\
				-[\bff{G}\bff{m}_{(2)}]^\T\\
				\vdots\\
				-[\bff{G}\bff{m}_{(j)}]^\T
			\end{bmatrix}\bm{\lambda}\le 
			\begin{bmatrix}
				-[x_i^\cv]_{\hat{\bff{p}},\bff{M}}^{(0)}(\bff{m}_{(1)})\\
				-[x_i^\cv]_{\hat{\bff{p}},\bff{M}}^{(1)}(\bff{m}_{(2)})\\
				\vdots\\
				-[x_i^\cv]_{\hat{\bff{p}},\bff{M}}^{(j-1)}(\bff{m}_{(j)})
			\end{bmatrix},
		\end{aligned}
	\end{equation}
	with $\bm{\lambda} := \begin{bmatrix}
		\bm{\lambda}^\A\\
		\bm{\lambda}^\B
	\end{bmatrix}$ and $\bff{G}:=\begin{bmatrix}
		\bff{G}^\A\\\bff{G}^\B
	\end{bmatrix}$. For $j=0$, the constraint $-[\bff{G}\bff{m}_{(j)}]^\T\bm{\lambda}\le-[x_i^\cv]_{\hat{\bff{p}},\bff{M}}^{(j-1)}(\bff{m}_{(j)})$ is understood to be vanished. Moreover, associated to the sequence~\eqref{sequencepsi}, define a sequence of sets $D_{\hat{\bff{p}},\bff{M}}^{(j)}$ such that for each $j\in\{0,...,r-1\}$, $D^{(j)}_{\hat{\bff{p}},\bff{M}}$ is the optimal solution set of the LP for evaluating $[x_i^\cv]^{(j)}_{\hat{\bff{p}},\bff{M}}(\bff{m}_{(j+1)})$ in \eqref{sequencepsi}. Then, it follows that 
	for each $j\in\{1,...,r-1\}$,
	\begin{equation}\label{Drelation}
		D^{(j)}_{\hat{\bff{p}},\bff{M}}\equiv\Big(\arg\max _{\bm{\lambda}\in D^{(j-1)}_{\hat{\bff{p}},\bff{M}}}[\bff{G}\bff{m}_{(j+1)}]^\T\bm{\lambda}\Big)\subset D^{(j-1)}_{\hat{\bff{p}},\bff{M}}.
	\end{equation} 
	Furthermore, the LD-derivative of $x_i^\cv$ at $\hat{\bff{p}}$ in the directions $\bff{M}$ is given by
	\begin{equation}\label{LDexpression}
		[x_i^\cv]'(\hat{\bff{p}};\bff{M})\equiv\bm{\lambda}^\T\bff{G}\bff{M},\quad\text{for any }\bm{\lambda}\in D^{(r-1)}_{\hat{\bff{p}},\bff{M}}.
	\end{equation} 
\end{theorem}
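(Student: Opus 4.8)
```latex
The plan is to recognize that the entire sequence~\eqref{sequencepsi} is really a single recursive application of parametric linear-programming sensitivity, where the ``primal'' problem is the LP~\eqref{eq:psi} defining the directional derivative. The starting point is that~\eqref{eq:psi} is a convex (indeed linear) parametric program in the right-hand-side parameter $\bff{d}$, and its optimal value $\bff{d}\mapsto[x_i^\cv]'(\hat{\bff{p}};\bff{d})$ is itself a piecewise-linear convex function of $\bff{d}$. Under Assumption~\ref{Assumption:slater2}, strong duality holds, so by LP duality the optimal value equals the dual optimum $\max\{[\bff{G}\bff{d}]^\T\bm{\lambda}\}$ over the dual-feasible set $\{\bm{\lambda}:\bff{A}^\T\bm{\lambda}^\A+\bff{B}^\T\bm{\lambda}^\B=\bff{e}^{(i)},\ \bm{\lambda}^\A\le\bff{0}\}$, which is exactly the $j=0$ case of~\eqref{sequencepsi}. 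This establishes the base case and identifies $D^{(0)}_{\hat{\bff{p}},\bff{M}}$ as the set of dual optima at direction $\bff{m}_{(1)}$.

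The core of the argument is an induction on $j$ that proves two things simultaneously: (i) that $[x_i^\cv]^{(j)}_{\hat{\bff{p}},\bff{M}}$ has the stated dual-LP form in~\eqref{sequencepsi}, and (ii) the nesting relation~\eqref{Drelation}. The key observation is that differentiating a convex piecewise-linear optimal-value function in a direction amounts to restricting the active dual solution set. Concretely, for the inductive step I would invoke the directional-derivative rule for parametric convex programs from~\cite{song2024generalized}: the directional derivative of the value function $[x_i^\cv]^{(j-1)}_{\hat{\bff{p}},\bff{M}}$ at the base point $\bff{m}_{(j)}$ in direction $\bff{d}$ is obtained by maximizing $[\bff{G}\bff{d}]^\T\bm{\lambda}$ over only those $\bm{\lambda}$ that were optimal for the previous value function evaluated at $\bff{m}_{(j)}$, i.e.\ over $D^{(j-1)}_{\hat{\bff{p}},\bff{M}}$. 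Encoding ``$\bm{\lambda}\in D^{(j-1)}_{\hat{\bff{p}},\bff{M}}$'' as the previously-feasible dual set intersected with the optimality cut $[\bff{G}\bff{m}_{(j)}]^\T\bm{\lambda}\ge[x_i^\cv]^{(j-1)}_{\hat{\bff{p}},\bff{M}}(\bff{m}_{(j)})$ yields precisely the accumulated stack of constraints in~\eqref{sequencepsi}. The nesting~\eqref{Drelation} then follows because each successive problem optimizes the same linear functional $[\bff{G}\bff{m}_{(j+1)}]^\T\bm{\lambda}$ over the optimal face of the prior problem, which is the standard lexicographic refinement of optimal faces.

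Having established the recursive dual form, the L-smoothness claim and the final LD-derivative formula~\eqref{LDexpression} follow by matching the construction to Definition~\ref{Definition:lsmooth}. Well-definedness of the whole homogenization sequence is exactly the condition that each successive directional derivative exists and is finite; since each $D^{(j)}_{\hat{\bff{p}},\bff{M}}$ is a nonempty face of a polyhedron (nonempty by strong duality, bounded because the primal value is finite), each LP in the sequence is solvable, giving L-smoothness. For~\eqref{LDexpression}, I would note that $D^{(r-1)}_{\hat{\bff{p}},\bff{M}}$ is, after $r-1$ refinements, a set on which the linear map $\bm{\lambda}\mapsto\bm{\lambda}^\T\bff{G}\bff{M}$ is constant in each coordinate of the LD-derivative; evaluating $[x_i^\cv]^{(r-1)}_{\hat{\bff{p}},\bff{M}}$ on the columns of $\bff{M}$ and assembling per Definition~\ref{Definition:LD} gives $\bm{\lambda}^\T\bff{G}\bff{M}$ for any $\bm{\lambda}$ in that terminal set.

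The main obstacle I anticipate is justifying the passage from the abstract parametric-program sensitivity results of~\cite{song2024generalized} to the concrete nested-LP constraint stack, specifically verifying that the optimal-solution-set recursion in those results coincides with restricting the dual feasible polyhedron by the optimality cuts as written. One must check that the hypotheses of the cited theorems (strong duality, boundedness of the dual optimal set, finiteness of each directional derivative) propagate down the sequence: this requires that Assumption~\ref{Assumption:slater2}'s Slater condition, which guarantees a nonempty bounded dual optimal set at level $0$, continues to guarantee solvability at every subsequent level. Since each subsequent LP only \emph{tightens} the feasible region while keeping the objective bounded below by the finiteness of the previous directional derivative, this should go through, but the bookkeeping linking ``$\arg\max$ over the previous optimal set'' to the explicit inequality cut is the delicate step that warrants the most care.
```
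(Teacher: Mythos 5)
Your proposal is correct and follows essentially the same route as the paper: LP strong duality for the base case $j=0$, then the Danskin-type observation that each directional derivative is the support function of the previous LP's optimal face (encoded by stacking optimality cuts), with compactness of that face guaranteeing solvability down the sequence, and finally reading off \eqref{Drelation} and \eqref{LDexpression}. The only cosmetic difference is that the paper treats $j=1$ explicitly via the subdifferential/support-function identity of convex analysis and then delegates $j\ge 2$ to \cite[Theorem~4]{song2024generalized}, whereas you phrase the whole step as a uniform induction; the mechanism is identical.
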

\begin{proof}
	Observe that for any $\bff{d}\in\R^{n_p}$, the optimization problem for defining $[x_i^\cv]^{(0)}_{\hat{\bff{p}},\bff{M}}$ in \eqref{sequencepsi} is the dual problem of the right-hand-side (RHS) optimization problem in \eqref{eq:psi}, and due to the strong duality of LPs~\cite[Theorem~4.4]{bertsimas1997introduction}, we have $[x_i^\cv]^{(0)}_{\hat{\bff{p}},\bff{M}}(\bff{d})\equiv [x_i^\cv]'(\hat{\bff{p}};\bff{d})$.
	
	Now, consider $j=1$, and for any $\bff{d}\in\R^{n_p}$, according to \eqref{sequencepsi},
	\begin{equation*}
		\begin{aligned}
			[x_i^\cv]^{(1)}_{\hat{\bff{p}},\bff{M}}({\bff{d}}):=\max_{\bm{\lambda}^\A,\bm{\lambda}^\B}\quad&[\bff{G}{\bff{d}}]^\T\bm{\lambda}\\ 
			\text{s.t.}\quad& \bff{A}^\T\bm{\lambda}^\A + \bff{B}^\T\bm{\lambda}^\B = \bff{e}^{(i)},\\
			&\bm{\lambda}^\A\le \bff{0},\\
			&
			-[\bff{G}\bff{m}_{(1)}]^\T
			\bm{\lambda}\le 
			-[x_i^\cv]^{(0)}_{\hat{\bff{p}},\bff{M}}(\bff{m}_{(1)}).
		\end{aligned}
	\end{equation*}
	Observe that the feasible region $D_{\hat{\bff{p}},\bff{M}}^{(1)}$ of the optimization problem above is the optimal dual solution set of the RHS optimization problem in \eqref{eq:psi} with $\bff{d}:=\bff{m}_{(1)}$. Then, since $[x_i^\cv]'(\hat{\bff{p}};\cdot)$ is well-defined on $\R^{n_p}$ and according to \cite[Theorem~3.3.2]{hiriart2013convex}, we have that $[x_i^\cv]'(\hat{\bff{p}};\cdot)$ is a convex function, and $D_{\hat{\bff{p}},\bff{M}}^{(1)}$ is the subdifferential of $[x_i^\cv]'(\hat{\bff{p}};\cdot)$ at $\bff{m}_{(1)}$ and is compact. Then, the expression of $[x_i^\cv]^{(1)}_{\hat{\bff{p}},\bff{M}}({\bff{d}})$ above is exactly the formulation of directional derivatives of a convex function as in \cite[Theorem~23.4]{rockafellar2015convex}. Since $D_{\hat{\bff{p}},\bff{M}}^{(1)}$ is compact, the rest of the sequence $	[x_i^\cv]^{(j)}_{\hat{\bff{p}},\bff{M}},\forall j\in\{2,...,r\}$ follows from \cite[Theorem~4]{song2024generalized}.
	
	Since moreover Proposition~\ref{prop:direct} establishes the local Lipschitz continuity and directional differentiability of $x_i^\cv$, it follows that $x_i^\cv$ is L-smooth at $\hat{\bff{p}}$.
	
	\eqref{Drelation} is indeed valid from observing~\eqref{sequencepsi}. Finally, \eqref{LDexpression} follows from \eqref{Drelation} and the fact that 
	\begin{equation*}
		\begin{aligned}
			[x_i^\cv]'(\hat{\bff{p}};\bff{M})\equiv\left[[x_i^\cv]^{(0)}_{\hat{\bff{p}};\bff{M}}(\bff{m}_{(1)})\;[x_i^\cv]^{(1)}_{\hat{\bff{p}};\bff{M}}(\bff{m}_{(2)})\;\cdot\cdot\cdot\;[x_i^\cv]^{(r-1)}_{\hat{\bff{p}};\bff{M}}(\bff{m}_{(r)})\right].
		\end{aligned}	
	\end{equation*} 
\end{proof}

Theorem~\ref{Theorem:marginal} gives a practical way for computing a subgradient of $x_i^\cv$ at $\hat{\bff{p}}$. Specifically, given $\bff{M}$ whose columns span $\R^{n_p}$, Theorem~\ref{Theorem:marginal} computes the LD-derivative $[x_i^\cv]'(\hat{\bff{p}};\bff{M})$. Then, since the columns of $\bff{M}$ span $\R^{n_p}$, an L-derivative $\bff{J}_\L x_i^\cv(\hat{\bff{p}};\bff{M})$, which reduces to a subgradient in this case, can be evaluated by solving the linear equation system~\eqref{LDL}. In particular, when $\bff{M}\in\R^{n_p\times n_p}$ is an identity matrix, we have $\bff{J}_\L x_i^\cv(\hat{\bff{p}};\bff{M})\equiv [x_i^\cv]'(\hat{\bff{p}};\bff{M})$. 
\begin{Remark}\label{remark:uniqueness}
	\normalfont
	Theorem~\ref{Theorem:marginal} shows that a subgradient of $x_i^\cv$ at $\hat{\bff{p}}$ may be computed by solving $n_p$ LPs. However, in practice, we probably do not need to solve all $n_p$ LPs. Since \eqref{Drelation} holds, if at some $\hat{j}$, the optimal solution set for the LP defining $[x_i^\cv]^{(\hat{j})}_{\hat{\bff{p}},\bff{M}}(\bff{m}_{(\hat{j}+1)})$ is a singleton, then we do not need to formulate and solve LPs for computing $[x_i^\cv]^{({j})}_{\hat{\bff{p}},\bff{M}}(\bff{m}_{({j}+1)}),\forall j > \hat{j}$. \cite[Proposition~3]{song2024generalized} (summarized from \cite{MANGASARIAN1979151}) examines whether an LP has a unique optimal solution or not by solving an auxiliary LP. Employing this (optimal) solution uniqueness examination method, in most cases, our new subgradient evaluation approach would only solve two LPs, namely the LP for computing $[x_i^\cv]^{(0)}_{\hat{\bff{p}},\bff{M}}(\bff{m}_{(1)})$ and the associated uniqueness examination LP. See \cite[Section~3.1]{song2024generalized} for a detailed explanation.
\end{Remark}
Finally, we note that this L-derivative-based subgradient evaluation approach is applicable regardless of the dimensions of $\bff{x}$ and $\bff{p}$. It may even be favorable over the method in Section~\ref{sec:np2} for $n_p=2$, since it only requires solving two LPs, while the previous method requires solving four. However, implementing the L-derivative-based approach may present additional complexities.

\section{Numerical examples}\label{sec:numerical}
This section presents numerical examples to illustrate our new proposed subgradient AD methods for implicit function relaxations, based on a proof-of-concept implementation in Julia~\cite{bezanson2017julia} that employs the following packages. The package \textsf{McCormick.jl}~v0.13.6 provided within EAGO \cite{wilhelm2022eago} was used to construct convex and concave relaxations and the corresponding subgradients, following either the standard McCormick relaxations~\cite{mccormick1976computability,mitsos2009mccormick,scott2011generalized} or the differentiable McCormick relaxations~\cite{khan2017differentiable}. All optimization problems were formulated via \textsf{JuMP.jl}~v1.13.0~\cite{dunning2017jump}. Convex nonlinear programs were solved using \textsf{IPOPT}~v3.14.13~\cite{wachter2006implementation}, and LPs were solved using \textsf{GLPK.jl} v1.1.3.~\cite{GLPK.jl}. The nonlinear equation system solver \textsf{NLsolve.jl}~v4.5.1~\cite{NLsolve.jl} was employed to evaluate the original implicit function at each parameter value. When necessary, the symbolic expressions of sensitivities of smooth functions were provided by \textsf{Symbolics.jl}~\cite{symbolics}. All solvers were used with their default settings. All computations were performed on a typical laptop.  
\begin{EExample}\label{exp:1}
	\normalfont
	This example considers an implicit function defined by the van der Waals equation of state in thermodynamics, adapted from \cite[Example~2]{cao2023general}. In this example, we will use uppercase letters to denote scalars, for consistency with engineering notational conventions.
	The van der Waals equation of state describes the relationship between pressure $P$ (atm), volume $V$ (L), temperature $T$ (K), and amount $n$ (mol) of non-ideal gases as follows:
	\begin{equation*}
		f(V,P,T):=\left(P+a\frac{n^2}{V^2}\right)(V-nb)-nRT = 0.
	\end{equation*}
	Here $R=0.0820574\;\frac{\text{L atm}}{\text{K mol}}$ is the gas constant, and $a$ and $b$ are van der Waals constants that depend on the gas's chemical composition. Consider $n=1$ mol of carbon dioxide gas with $a = 3.610\;\frac{\text{L}^2\text{ atm}}{\text{mol}^2}$ and $b = 0.0429\;\frac{\text{L}}{\text{mol}}$. Let $P\in [0.5,1.1]$ and $T \in[250, 320]$. Viewing $V$ as an implicit function of $(P,T)$, it is verified that an interval $X:=[10,70]$ encloses all possible values of $V$. With this setup, we aim at constructing convex and concave relaxations of $V$ in terms of $P$ and $T$ following the closed-form reformulation method proposed in Section~\ref{sec:nx=1}. With these closed-form relaxations, the subgradients are readily available as in Proposition~\ref{prop:special}.
	
	We construct the piecewise affine relaxations required in Assumption~\ref{Assumption:special} using \emph{subtangents} of McCormick relaxations of $f$. \ysnote{Analogous to a tangent, a subtangent is a hyperplane that passes through a reference point on the underlying relaxation and has a normal vector equal to some subgradient of the relaxation at that point.} We randomly pick two points $(17.67, 0.68, 274.27)$ and $(67.78, 0.73,288.82)$ on the domain $[10,70]\times[0.5,1.1]\times[250,320]$. Then, a two-piece affine convex relaxation is constructed as the pointwise maximum of the subtangents of the convex McCormick relaxation at these two points:
	\begin{equation*}
		f^\cv(V,P,T) = \max\left\{\begin{aligned}&0.50V + 9.96P-0.08T-4.86,\\
			&1.14V + 69.96P - 0.08T - 79.41
		\end{aligned}\right\},
	\end{equation*} 
	and similarly, a two-piece affine concave relaxation is constructed as the pointwise minimum of two subtangents of the concave McCormick relaxations:
	\begin{equation*}
		f^\cc(V,P,T) = \min\left\{\begin{aligned}&1.13V + 9.95P -0.08T-10.97,\\&0.43V + 69.95P - 0.08T-30.11\end{aligned}\right\}.
	\end{equation*}
	Then, with the functions $g^\cv$ and $g^\cc$ constructed in \eqref{def:g} and \eqref{def:gi}, the implicit function relaxations $V^\cv$ and $V^\cc$ are constructed following \eqref{optspecial2}:
	\begin{equation*}
		\begin{aligned}
			V^\cv(P,T)&:=\max\left\{\begin{aligned}
				&10,\\
				&-8.81P+0.07T+9.71,\\
				&-162.67P+0.19T+70.02
			\end{aligned}\right\},\\
			V^\cc(P,T)&:=\min\left\{\begin{aligned}
				&70,\\
				&-19.92P+0.16T+9.72,\\
				&-61.37P+0.07T+69.66
			\end{aligned}\right\}.
		\end{aligned}
	\end{equation*}
	Figure~\ref{fig:exp1} depicts the original implicit function and the constructed convex and concave relaxations. It shows that our method does indeed provide correct piecewise affine relaxations without solving any optimization problems, as guaranteed by the analysis in Section~\ref{sec:nx=1}. Note that with the explicit forms of these relaxations, the corresponding subgradients are readily available through Proposition~\ref{prop:special}.
	\begin{figure}
		
		\centering
		\includegraphics[width=0.4\textwidth]{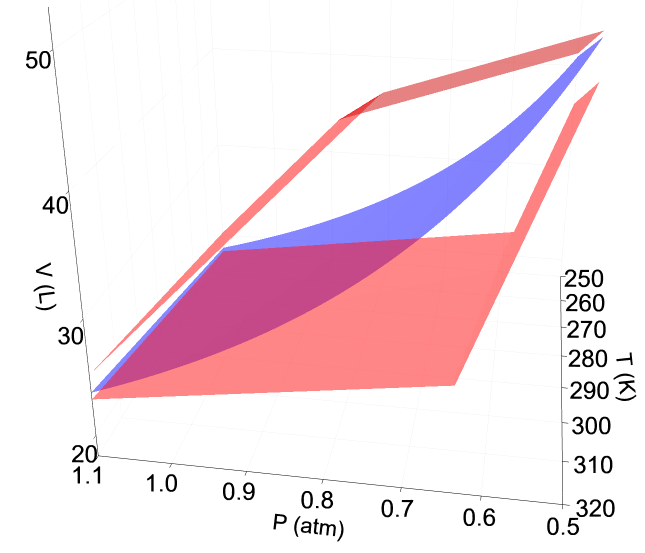}
		\caption{The implicit function $V$ (blue) in terms of $P$ and $T$ on $[0.5,1.1]\times[250,320]$, along with the constructed convex and concave relaxations (red), for Example~\ref{exp:1}.}
		\label{fig:exp1}
	\end{figure}
\end{EExample}
\begin{EExample}\label{exp:2}
	\normalfont
	Consider the following system of equations adapted from \cite[Example~5.2]{stuber2015convex} with three variables $(z_1,z_2,z_3)$ and two uncertain parameters $(p_1, p_2)$:
	\begin{equation*}
		\begin{aligned}
			c(e^{38z_1}-1)+p_1z_1-1.67z_2+0.69z_3-8.03=0,\\
			1.98c(e^{38z_2}-1)+0.66z_1+p_2z_2+0.66z_3+4.05=0,\\
			c(e^{38z_3}-1)+z_1-z_2+3.7z_3-6.0=0,\\
		\end{aligned}
	\end{equation*}
	where $c = 1.0\times10^{-9}$, $ p_1\in[0.50,0.74]$, and $p_2\in[1.21,1.48]$. An interval that encloses all possible values of $(z_1,z_2,z_3)$ is given by $[0.25,0.70]\times[-5.0,-2.0]\times[0.42,0.53]$. We employ the differentiable McCormick relaxations~\cite{khan2017differentiable} for each residual function, and compute the implicit function relaxations by constructing and solving the optimization problems in \eqref{opt:convexrelaxation}. Then, at any reference point, subgradients of the implicit function relaxations are computed following Proposition~\ref{prop:np2}, where the required directional derivatives are computed by constructing and solving the auxiliary LP~\eqref{opt:direct}. Constructing the auxiliary LP requires identifying all active constraints at an optimal solution of the optimization problem in \eqref{opt:convexrelaxation}. Numerically, we consider a constraint $g_i(\hat{\bm{\xi}},\hat{\bff{p}})=0$ to be active if the value of the residual function $g_i(\hat{\bm{\xi}},\hat{\bff{p}})$ is extremely close to zero, e.g., within a predefined tolerance $1\times10^{-7}$.

	Figure~\ref{fig:exp2} presents a cross-section plot of the original implicit function $x_3$, convex and concave relaxations, and corresponding subtangent lines generated by our new method. Observe that these subtangents do indeed appear to be correct, as is guaranteed by Proposition~\ref{prop:np2}. 
	
	\begin{figure}
		
		\centering
		\includegraphics[width=0.32\textwidth]{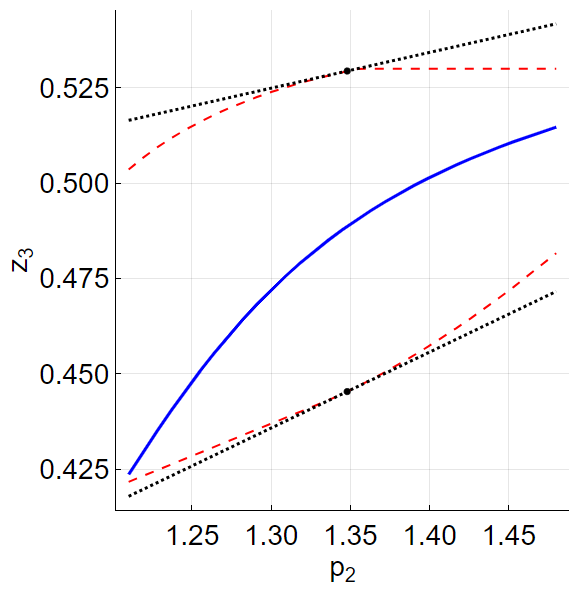}
		\caption{A cross-section at $p_1:=0.6$ of the implicit function $z_3$ (solid blue) of $(p_1,p_2)$, along with convex and concave relaxations (dashed red) and subtangents (dotted black) generated from the reference point $(p_1,p_2):=(0.6,1.348)$, for Example~\ref{exp:2}.}
		\label{fig:exp2}
	\end{figure}
\end{EExample}
\begin{EExample}\label{exp:3}
	\normalfont
	\ysnote{This example applies the L-derivative-based subgradient evaluation method presented in Section~\ref{sec:LDapproach} to a problem in chemical reactor design. Stuber and Barton~\cite{stuber2015semi} studied a nonlinear equation-based model of a continuous-stirred tank reactor for benzene chlorination. They considered the reactor's outputs, including the composition of the product stream and the product stream flow rate, as implicit functions of process uncertainties such as the input flow rate and chemical reaction constants. They deployed this model in a semi-infinite global optimization formulation, whose solution requires generating convex and concave relaxations of the implicit functions. The following equation system is adapted from \cite[Example~3]{stuber2015semi}:
		\begin{equation*}
			\begin{aligned}
				p_3 - z_1z_4 - 15r_1 &= 0,\\
				-z_2z_4 + 15(r_1-r_2) &= 0,\\
				-z_3z_4 + 15r_2 &= 0,\\
				1 - z_1 - z_2 - z_3 &= 0,
			\end{aligned}
		\end{equation*}
		where
		\begin{equation*}
			\begin{aligned}
				r_1 &= p_1z_1/(0.09z_1  + 0.10z_2 + 0.11z_3),\\
				r_2 &= p_2z_2/(0.09z_1  + 0.10z_2 + 0.11z_3).
			\end{aligned}
		\end{equation*}
		In this formulation, the uncertain parameters are {$p_1\in[0.38,0.42]$}, {$p_2\in[0.053,0.058]$}, and {$p_3\in[8,10]$}, and the outputs $z_i$ are known to  have the following bounds: {$z_1\in[0.10,0.16]$}, {$z_2\in[0.40,0.49]$}, {$z_3\in[0.35, 0.45]$}, and {$z_4\in[8,10]$}. Since this example has three uncertain parameters and four outputs, the subgradient evaluation methods employed in the previous two examples are not applicable.

		Following Adjiman et al.~\cite{adjiman1998global1}, we employ the $\alpha$BB relaxations (in their uniform diagonal shift formulation) for each residual function, with the  diagonal shift parameter $\alpha$ computed using the Scaled Gerschgorin Method. Then, convex and concave relaxations for the implicit functions are generated following Proposition~\ref{Proposition:huiyi}. To evaluate subgradients, we construct and solve the sequence of LPs presented in Theorem~\ref{Theorem:marginal} with $\bff{M}:=\bff{I}$. After solving each LP, we solve an auxiliary LP~\cite[Proposition~3]{song2024generalized} to examine the uniqueness of the obtained optimal solution. As discussed in Remark~\ref{remark:uniqueness}, when the solution is unique, there is no need to solve further LPs in the sequence.
		
		Figure~\ref{fig:exp3} illustrates a cross-sectional plot of the implicit function $z_1$, the implicit function relaxations, and two subtangent planes generated from some reference points. Observe that these planes appear to be correct subtangents, as guaranteed by Theorem~\ref{Theorem:marginal}. Moreover, for each reference point, only two LPs were solved to evaluate a subgradient, namely the first LP in the LP sequence and the corresponding uniqueness examination~LP. }
	\begin{figure}
		
		\centering
		\includegraphics[width=.4\textwidth]{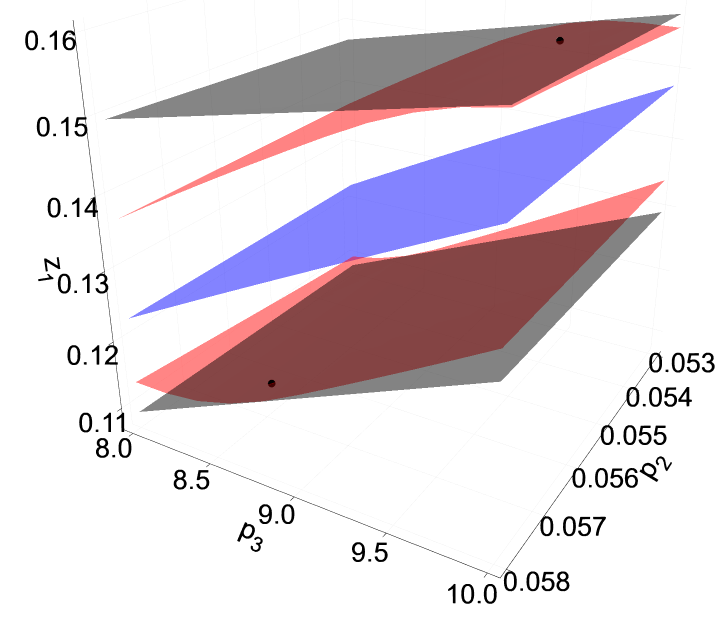}
		\caption{A cross-section at $p_1:=0.40$ of the implicit function $z_1$ (blue) of $(p_1,p_2, p_3)$, along with convex and concave relaxations (red) and  subtangents constructed using our new approach (black), for  Example~\ref{exp:3}. One subtangent is generated using the subgradient of the convex relaxation at $(p_1,p_2,p_3):= (0.40,0.0575,8.7 )$, and another is generated using the subgradient of the concave relaxation at $(p_1,p_2,p_3):= (0.40,0.0545,9.6 )$.}
		\label{fig:exp3}
	\end{figure}
	
\end{EExample}
\section{Conclusions}

This article has presented the first subgradient propagation rules for the implicit function relaxations~\eqref{opt:convexrelaxation}. These rules were specifically developed to be compatible with the forward AD mode for subgradient propagation~\cite{mitsos2009mccormick}, and effectively allow us to include implicit functions in this method's elemental function library, whenever we have sufficient access to well-behaved relaxations of the residual function. \ysnote{Future work may involve applying the new subgradient evaluation approaches within global optimization case studies.}

\end{document}